\newtheorem{theorem}{Theorem}[section]
\newtheorem{corollary}[theorem]{Corollary}
\newtheorem{lemma}[theorem]{Lemma}
\newtheorem{definition}[theorem]{Definition}
\newtheorem{remark}[theorem]{Remark}
\newcommand{\bfS}[1]{\ensuremath{\mathbf{S}^{#1}}} % Sphere notation
\newcommand{\floor}[1]{\lfloor #1 \rfloor} % Floor
\newcommand{\R}{\mathbb R}
\newcommand{\mymatrixstretch}{1.3}
\title[CMC Hypertorus in \bfS{4}]{Existence of a CMC hypertorus in \bfS{4} via computer assistance}
\author{Oscar Perdomo}
\address{Department of Mathematical Sciences, Central Connecticut State University}
\email{perdomoosm@ccsu.edu}
\date{\today}
\begin{document}

\maketitle

\begin{abstract}
The round Taylor method uses rational arithmetic, allowing control of both the round-off error and the numerical error in approximating solutions of differential equations. In this paper, we employ this method together with the Poincaré–Miranda theorem to prove the existence of a hypertorus with constant mean curvature (CMC) in the four-dimensional unit sphere.
\end{abstract}

\section{Introduction and Preliminary Theorems}

The simplest examples of compact constant mean-curvature (CMC) hypersurfaces in the unit sphere \(\bfS{n+1}(1)\) are the totally umbilical spheres \(S^n(r)\), the Clifford hypersurfaces 
\[
  \bfS{k}(r)\times \bfS{n-k}\bigl(\sqrt{1-r^2}\bigr),
\] 
and the rotational hypersurfaces \cite{P2010}, which can be parametrized by
\[
  \phi(t,y)
  \;=\;
  \bigl(\xi_2(t)\,y,\;\xi_1(t)\bigr),
  \quad
  y\in \bfS{n-1}(1),\; t\in\R.
\]

More recently, Carlotto and Schulz \cite{Carl} exhibited several minimal hypersurfaces in \(\bfS{2n+2}(1)\) of the form

\[
  \phi(t,y,z)
  \;=\;
  \bigl(\xi_3(t)\,z,\;\xi_2(t)\,y,\;\xi_1(t)\bigr),
  \quad
  y,z\in \bfS{n}(1),\; t\in\R.
\]
One of their examples was extended to a family of CMC hypersurfaces in \(\bfS{2n+2}(1)\) by Wei et al.\ \cite{HW,LW}.  For minimal hypersurfaces, in the  case \(n=1\), it has been conjectured that the embedded example of Carlotto–Schulz is the only minimal embedded hypertorus (an immersion of \(\bfS{1}\times \bfS{1}\times \bfS{1}\)) in \(\bfS{4}\).  Very recently, the author has numerically explored other families of CMC hypersurfaces of the same type and found evidence for new compact embedded examples, distinct from those of \cite{HW,LW}.

In this paper, we combine a rational-arithmetic ``Round Taylor'' method \cite{Pna1} with the Poincaré–Miranda theorem to give a rigorous existence proof of a new embedded CMC hypersurface in \(\bfS{4}\).  In particular, we address the following issues:
\begin{itemize}
  \item Numerical results may depend sensitively on the computer’s rounding strategy.
  \item As noted in \cite{Pna1}, if one avoids any rounding, then after just thirty steps of the Euler method for
\[
  y' = y - \tfrac13y^2,\quad y(0)=\tfrac12,\quad \Delta t=0.01
\]
the exact result is the rational
\[
  \frac{1244197046778066277036445468762843519}
       {2407347121029120000000000000000000000}.
\]
The numerator and denominator grow so large that they become impractical for standard numerical evaluation.

  \item We must ensure that the true solution of the ODE exists up to the target time; naive numerical methods can produce an ``answer'' at some \(t_*\) even if the actual solution ceases to be defined there.

\end{itemize}

Let us state the our main theorem

\begin{theorem}\label{mainthm}

There exist real numbers $a_*\in\left(\frac{5204}{10000},\frac{5244}{10000}\right)$ and  
$t_*\in\left(\frac{3966}{10000},\frac{3991}{10000}\right)$ such that the solution 
$(r(t),\theta(t),\alpha(t))$ of the differential  equation

\[
\left\{
\begin{aligned}
\dot r     &= \cos\alpha,\\
\dot \theta &= \frac{\sin\alpha}{\sin r},\\
\dot \alpha &= \frac{2\cot(2\theta)}{\sin r}\,\cos\alpha
               - 3\cot r\,\sin\alpha -3\,,
\end{aligned}
\right.
\]

that satisfies $r(0)=\frac{\pi}{2}$, $\theta(0)=a_*$ and $\alpha(0)=\pi$ is defined for all $t\in[0,\frac{3991}{10000}]$ and 

$$\alpha(t_*)=\frac{\pi}{2},\quad \theta(t_*)=\frac{\pi}{4}$$
\end{theorem}

Due to the symmetries of the differential equation, it follows that the solution $(r(t),\theta(t),\alpha(t))$ is defined on the interval $[0,4t_*]$ (actually it is define for all real numbers) and 

\begin{eqnarray}\label{profile curve}
 \bigl(\sin r(t) \cos \theta(t) ,\sin r(t) \sin \theta(t),\cos r(t)   \bigr)
\end{eqnarray}
defines a smooth closed curve in the sphere $\bfS{2}$. See for example  \cite{Carl, HW}. We have that $\beta(0)=\beta(4t_*)$ and $\beta'(0)=\beta'(4t_*)$.

If follows from the work in \cite{HW} that the curve $\beta(t)$ is the profile curve of an embedded constant mean curvature hypersurface in $\bfS{4}$. More precisely, we have:

\begin{corollary} If $(r(t),\theta(t),\alpha(t))$ is the curve defined in Theorem \ref{mainthm}, then 

$$M=\{ \bigl(\sin r(t) \cos \theta(t)\, y, \sin r(t)  \sin \theta(t) z,\cos r(t)\bigr): t\in [0,4t_*], y, z\in \bfS{1}\}$$  

is an embedded  hypersurface in $\bfS{4}$ with constant mean curvature $H=-3$

\end{corollary}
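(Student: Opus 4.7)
The plan is to reduce the Corollary to a direct mean-curvature computation for the rotational immersion
\[
\phi(t,y,z)=\bigl(\sin r(t)\cos\theta(t)\,y,\;\sin r(t)\sin\theta(t)\,z,\;\cos r(t)\bigr),
\]
verifying that the third ODE of Theorem~\ref{mainthm} is precisely the pointwise identity ``sum of principal curvatures $\equiv -3$''. Smoothness and embeddedness of $M$ then follow from the simple closed-curve property of $\beta$ stated before the Corollary, together with the sign conditions $\sin r\cos\theta>0$ and $\sin r\sin\theta>0$ along $[0,4t_*]$, as in the construction in \cite{HW}.

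First I would note that the first two equations, $\dot r=\cos\alpha$ and $\dot\theta=\sin\alpha/\sin r$, say exactly that $\beta$ has unit speed in the round metric $dr^2+\sin^2 r\,d\theta^2$ on $\bfS{2}$ and that $\alpha$ measures the angle between $\dot\beta$ and $\partial_r$. On the open set where $\sin r\cos\theta$ and $\sin r\sin\theta$ are positive, the map $\phi$ is a regular immersion, $\{\phi_t,\phi_y,\phi_z\}$ is an orthogonal frame with $\abs{\phi_y}=\sin r\cos\theta$ and $\abs{\phi_z}=\sin r\sin\theta$, and a unit normal to $\phi$ in $\bfS{4}$ has the form $N=(n_1 y,\,n_2 z,\,n_3)$, where the triple $(n_1,n_2,n_3)=-\sin\alpha\,e_r+\cos\alpha\,e_\theta$ is the unit normal to $\beta$ in $\bfS{2}$ (with $e_r,\,e_\theta$ the orthonormal spherical-coordinate vectors).

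With this frame I would compute the three principal curvatures of $\phi$ directly. Using the ambient derivatives in $\R^5$ one finds the orbit curvatures $\kappa_y=-n_1/(\sin r\cos\theta)$ and $\kappa_z=-n_2/(\sin r\sin\theta)$; substituting the explicit expressions for $n_1,n_2$ and applying the identity $\tan\theta-\cot\theta=-2\cot(2\theta)$ gives
\[
\kappa_y+\kappa_z=2\cot r\,\sin\alpha-\frac{2\cot(2\theta)}{\sin r}\,\cos\alpha.
\]
The remaining principal curvature equals the geodesic curvature $\kappa_g=\dot\alpha+\cot r\,\sin\alpha$ of $\beta$ in $\bfS{2}$, obtained from the Levi-Civita connection of the round metric. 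Summing,
\[
H=\kappa_g+\kappa_y+\kappa_z=\dot\alpha+3\cot r\,\sin\alpha-\frac{2\cot(2\theta)}{\sin r}\,\cos\alpha,
\]
which the third equation of the ODE forces to equal $-3$.

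Finally, for smoothness and embeddedness I would invoke the two reflective symmetries of the system: $(r,\theta,\alpha,t)\mapsto(r,\tfrac{\pi}{2}-\theta,\pi-\alpha,2t_*-t)$, fixing the point at $t_*$ where $\theta=\tfrac{\pi}{4}$, $\alpha=\tfrac{\pi}{2}$, and $(r,\theta,\alpha,t)\mapsto(\pi-r,\theta,-\alpha,-t)$, fixing the initial point at $t=0$ where $r=\tfrac{\pi}{2}$, $\alpha=\pi$. Together these extend the piece $\beta\bigl|_{[0,t_*]}$ to a smooth simple closed curve on $[0,4t_*]$ staying inside $\{r\in(0,\pi),\,\theta\in(0,\pi/2)\}$; the orbit circles then remain nondegenerate and $\phi$ descends to an embedding of $\bfS{1}\times\bfS{1}\times\bfS{1}$. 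The main obstacle I expect is the principal-curvature computation itself: one must fix a consistent orientation of the normal so that the sign of $H$ in the displayed identity genuinely matches the constant $-3$ on the right-hand side of the $\dot\alpha$ equation (rather than $\pm H$ or $H/3$). Once the orientation and the trigonometric collapse via $\tan\theta-\cot\theta=-2\cot(2\theta)$ are handled, the rest of the argument is an appeal to Theorem~\ref{mainthm} (for existence of the profile curve and the sign constraints along it) and to \cite{HW} (for the embedded hypersurface construction).
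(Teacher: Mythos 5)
Your proposal is correct in its main computational content, but it takes a more self-contained route than the paper does. The paper's own ``proof'' of this Corollary is essentially a citation: it records that the symmetries of the ODE close the profile curve $\beta$ smoothly on $[0,4t_*]$ (with $\beta(0)=\beta(4t_*)$, $\beta'(0)=\beta'(4t_*)$) and then states that the conclusion ``follows from the work in \cite{HW}'', where this family of hypersurfaces and the reduction of the CMC condition to exactly this ODE are developed. You instead verify the key identity yourself: your principal-curvature computation is right --- with the normal $\nu=-\sin\alpha\,e_r+\cos\alpha\,e_\theta$ one indeed gets $\kappa_y+\kappa_z=2\cot r\,\sin\alpha-\tfrac{2\cot(2\theta)}{\sin r}\cos\alpha$ via $\tan\theta-\cot\theta=-2\cot(2\theta)$, and $\kappa_g=\dot\alpha+\cot r\,\sin\alpha$, so the third equation of the system is precisely $\kappa_g+\kappa_y+\kappa_z=-3$ --- and your two reflections $(r,\theta,\alpha,t)\mapsto(r,\tfrac{\pi}{2}-\theta,\pi-\alpha,2t_*-t)$ and $(r,\theta,\alpha,t)\mapsto(\pi-r,\theta,-\alpha,-t)$ are genuine symmetries of the ODE fixing the data at $t_*$ and at $0$, which is exactly the closure argument the paper asserts with a reference to \cite{Carl,HW}. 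What your route buys is independence from \cite{HW} for the curvature identity and an explicit statement of which symmetries are used; what the paper's route buys is brevity, since \cite{HW} already contains both the reduction and the embeddedness construction.

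One caution: your claim that the two reflections ``together extend $\beta|_{[0,t_*]}$ to a smooth simple closed curve staying inside $\{r\in(0,\pi),\,\theta\in(0,\pi/2)\}$'' overstates what symmetry alone gives. Smooth closure follows from the matching of position and tangent at $4t_*$, but simplicity of the closed curve and its confinement to the open region (hence nondegeneracy of the orbit circles and embeddedness of $M$) require additional information about the arc on $[0,t_*]$ --- e.g.\ the quantitative bounds from the proof of Theorem \ref{mainthm}, where the true solution is trapped in $U_2$ (so $r$ stays near $\tfrac{\pi}{2}$ and $\theta\in(0.521,0.787)\subset(0,\tfrac{\pi}{2})$), together with monotonicity/fundamental-domain considerations, or else the corresponding analysis in \cite{HW}. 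Since you, like the paper, ultimately appeal to \cite{HW} for the embedding step, this is not a fatal gap, but if you intend the argument to be self-contained you should supply that confinement and injectivity verification explicitly rather than attributing it to the symmetries.
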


Let us finish the introduction by stating two theorems which along with the Round Taylor method are essentially all that is needed to show the main theorem in this paper.

The Poincaré–Miranda theorem can be thought of as an extension of the intermediate value theorem for a system of two functions with two variables, it states: 

\begin{theorem}[Poincaré–Miranda in two variables]\label{thm:poincare-miranda-2d}
Let 
\[
  F,\,G :   [a_1,a_2]\times[t_1,t_2]. \;\longrightarrow\;\mathbb{R}
\]
be continuous.  If
\begin{enumerate}%[\upshape(i)]
  \item \(F(a,t_1)>0\) and \(F(a,t_2)<0\) for all \(a\in[a_1,a_2]\),
  \item \(G(a_1,t)<0\) and \(G(a_2,t)>0\) for all \(t\in[t_1,t_2]\),
\end{enumerate}
then there exists a point \((a_0,t_0)\in[a_1,a_2]\times[t_1,t_2]\) such that
\[
  F(a_0,t_0)=0
  \quad\text{and}\quad
  G(a_0,t_0)=0.
\]
\end{theorem}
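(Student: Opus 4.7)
The plan is to deduce the theorem from Brouwer's fixed point theorem by constructing a continuous self-map of the rectangle $R=[a_1,a_2]\times[t_1,t_2]$ whose fixed points are forced to be common zeros of $F$ and $G$.

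First, I would let $P_a:\mathbb{R}\to[a_1,a_2]$ and $P_t:\mathbb{R}\to[t_1,t_2]$ denote the ``clamp'' projections onto the two intervals, and define the continuous map $\Psi:R\to R$ by
\[
  \Psi(a,t) \;=\; \bigl(P_a(a-G(a,t)),\;P_t(t+F(a,t))\bigr).
\]
The rectangle $R$ is convex, compact and nonempty, so Brouwer's fixed point theorem produces a point $(a_0,t_0)\in R$ with $\Psi(a_0,t_0)=(a_0,t_0)$. The heuristic is that hypotheses (1)--(2) make $(a,t)\mapsto(a-G(a,t),\,t+F(a,t))$ point strictly ``inward'' on $\partial R$, so the clamp can only fail to act as the identity in directions that would violate these sign conditions.

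Next, I would verify that any such fixed point is in fact a common zero by a short case analysis on each coordinate. If $a_0-G(a_0,t_0)<a_1$, then the first-coordinate clamp forces $a_0=a_1$; but then hypothesis (2) gives $G(a_1,t_0)<0$, so $a_0-G(a_0,t_0)=a_1-G(a_1,t_0)>a_1$, contradicting the starting assumption. The symmetric case $a_0-G(a_0,t_0)>a_2$ is ruled out using $G(a_2,t_0)>0$. Hence $a_0-G(a_0,t_0)\in[a_1,a_2]$, the projection $P_a$ acts as the identity on this value, and $G(a_0,t_0)=0$. The analogous dichotomy applied to $t_0+F(a_0,t_0)$, using hypothesis (1) along the edges $t=t_1$ and $t=t_2$, yields $F(a_0,t_0)=0$.

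The only nontrivial ingredient is Brouwer's theorem itself; everything else reduces to the boundary-sign check above. A convenient feature of this formulation is that no small step size is needed: the projections $P_a,P_t$ automatically keep $\Psi$ inside $R$ regardless of the magnitudes of $F$ and $G$, and in fact $F$ and $G$ could be replaced by any positive scalar multiples without altering the argument. The main place one must be careful is to use strict inequalities in (1)--(2), so that the contradictions at the end of each case are strict as well.
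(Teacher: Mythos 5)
Your proof is correct. Note, however, that the paper itself does not prove this statement at all: Poincaré–Miranda is quoted in the introduction as a known classical result (an ``extension of the intermediate value theorem''), on the same footing as the Gronwall-type stability theorem, and is then simply applied in the proof of the main theorem. What you have supplied is the standard reduction to Brouwer's fixed point theorem: the map $\Psi(a,t)=\bigl(P_a(a-G(a,t)),\,P_t(t+F(a,t))\bigr)$ is continuous on the compact convex rectangle, Brouwer yields a fixed point, and your sign conventions are the right ones — hypothesis (1) pushes $t+F$ inward at $t_1$ and $t_2$, hypothesis (2) pushes $a-G$ inward at $a_1$ and $a_2$ — so the clamped coordinates can only equal the fixed point if the clamps act as the identity, forcing $F(a_0,t_0)=G(a_0,t_0)=0$; the case analysis ruling out clamping is airtight and correctly uses the strict inequalities. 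The trade-off is only that you invoke Brouwer (which is in fact equivalent to Poincaré–Miranda), whereas one could alternatively give a more elementary two-dimensional argument via degree or a connectedness/IVT-style sweep; for the purposes of this paper either is fine, since the theorem is used as a black box.
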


The second theorem \cite{coddington1989}, is a consequence of  Gronwall's inequality and allows us to compare two different solutions of the same differential equation.

\begin{theorem}\label{thm:stability}
Let us assume that 
\[
f\colon U\subset\mathbb{R}^n \;\longrightarrow\;\mathbb{R}^n
\]
is \(C^1\) functions defined on an open convex set \(U\) such that
\(\lvert Df(u)\rvert<K\).  If \(y(t)\) and \(z(t)\) satisfy
\(\dot y(t)=f(y(t))\) and \(\dot z(t)=f(z(t))\), respectively, then
\[
\lvert y(t)-z(t)\rvert
\le
\lvert y(t_0)-z(t_0)\rvert\,e^{K\lvert t-t_0\rvert}.
\]
\end{theorem}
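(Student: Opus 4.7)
The plan is to reduce the theorem to a scalar Gronwall inequality for the squared distance between the two solutions. Set $w(t)=y(t)-z(t)$, so that $\dot w(t)=f(y(t))-f(z(t))$ and the claim becomes $\abs{w(t)}\le \abs{w(t_0)}\,e^{K\abs{t-t_0}}$. The first preliminary step is a Lipschitz estimate for $f$: because $U$ is convex, the segment from $z(t)$ to $y(t)$ lies in $U$, so
\[
f(y)-f(z)=\int_0^1 Df\bigl(z+s(y-z)\bigr)(y-z)\,ds,
\]
and the assumption $\abs{Df}<K$ yields $\abs{f(y)-f(z)}\le K\,\abs{y-z}$ for all $y,z\in U$.

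The main step is to avoid the non-smoothness of $t\mapsto\abs{w(t)}$ at zeros of $w$ by working with the squared norm $\phi(t)=\abs{w(t)}^2=\inner{w(t),w(t)}$, which is always $C^1$ in $t$. Differentiating and applying Cauchy–Schwarz together with the Lipschitz estimate gives
\[
\dot\phi(t)=2\,\inner{w(t),\dot w(t)}\le 2\,\abs{w(t)}\,\abs{f(y(t))-f(z(t))}\le 2K\,\phi(t).
\]
The scalar Gronwall inequality applied to $\dot\phi\le 2K\phi$ on $[t_0,t]$ then yields $\phi(t)\le\phi(t_0)\,e^{2K(t-t_0)}$, and taking square roots produces the desired bound for $t\ge t_0$. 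For $t<t_0$ I would run the same argument on the time-reversed solutions $\tilde y(s)=y(t_0-s)$ and $\tilde z(s)=z(t_0-s)$, whose vector field $-f$ satisfies the same derivative bound $\abs{-Df}<K$, completing the two-sided estimate with $\abs{t-t_0}$ in the exponent.

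The argument is a standard Gronwall manipulation, so there is no deep obstacle. The delicate points are essentially bookkeeping: the non-differentiability of $\abs{w}$ at points where $w=0$, which is sidestepped by passing to $\abs{w}^2$; the role of the convexity hypothesis on $U$, needed to keep the segment between $y(t)$ and $z(t)$ inside the region where the bound on $Df$ is assumed; and the tacit assumption that both trajectories remain in $U$ throughout the interval under consideration, without which the Lipschitz estimate cannot be applied uniformly in $t$.
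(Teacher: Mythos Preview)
Your proof is correct and follows essentially the same route as the paper's own argument: derive a Lipschitz bound for $f$ from convexity of $U$ and the assumption $\lvert Df\rvert<K$, then apply Gronwall's inequality to the difference of the two solutions. The only distinction is that you work with $\lvert w(t)\rvert^2$ to avoid the non-differentiability of $\lvert w(t)\rvert$ at zeros, whereas the paper differentiates $\lvert e(t)\rvert$ directly and glosses over that point; your version is the more careful of the two.
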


%%%%%%%%%%%%
%%%%%%%%%%%%
%%%%%%%%%%%%

\section{The Round Taylor method and some bounds}

The Round Taylor Method (RTM) of order \(m\) is a small variation of the classical Taylor method for approximating solutions of initial-value ODEs. Its first advantage is that each step produces a well-defined rational vector, so the result is independent of the computer’s floating-point behavior. Its second advantage is that the global-error estimate theorem incorporates the rounding error.  Its third advantage is that, although verifying the theorem’s hypotheses can be intricate, it provides a fully rigorous existence proof for the true solution of the differential equation studied.

RTM relies on the floor function \(\floor{x}\), which assigns to a real number \(x\) the largest integer not exceeding \(x\).  For a vector \(x=(x_1,\dots,x_k)\), we write
\[
  \floor{x} \;=\; (\floor{x_1},\dots,\floor{x_k})\,.
\]

Consider the initial‐value problem
\begin{equation}\label{ns}
  Y'(t) \;=\; f\bigl(Y(t)\bigr), 
  \quad
  Y(0)=y_0,
\end{equation}
where \(Y(t)=(Y_1(t),\dots,Y_n(t))^T\) and \(f(y)=(f_1(y),\dots,f_n(y))^T\).  Given two positive rationals \(h\) and \(R\), we define sequences \(\{z_i\}\) and \(\{y_i\}\subset\R^n\) by
taking $z_0=R\;\floor{\tfrac1R\,y_{i+1}}$ and then
\begin{align}\label{tm}
y_{i+1}
&= z_i + f(z_i)\,h
   + F_1(z_i)\,\frac{h^2}{2!}
   + \dots
   + F_{m-1}(z_i)\,\frac{h^m}{m!},
\notag\\
z_{i+1}
&= R\;\floor{\tfrac1R\,y_{i+1}},
\end{align}
starting from \(z_0=y_0\).  Here
\[
  F_1=(Df)\,f,\quad
  F_2=(DF_1)\,f,\;\dots,\;
  F_{m-1}=(DF_{m-2})\,f,
\]
and \(D\) denotes the Jacobian operator.

\begin{theorem}\label{RTM}
Let \(h,R>0\) and \(k\in\mathbb{N}\).  Consider the RTM sequences \(\{(y_i,z_i)\}_{i=0}^k\) from \eqref{tm} for the ODE \(Y'=f(Y)\), \(Y(0)=y_0\).  Suppose there exist constants \(M_0,M_1,\dots,M_n\), \(K_0,\dots,K_{m-1}\) and 
two sets
\[
  U_1 = \bigl\{u\in\R^n:b_i\le u_i\le c_i,\;i=1,\dots,n\bigr\},
  \quad
  U_2 = \bigl\{u\in\R^n:b_i-\epsilon< u_i< c_i+\epsilon,\;i=1,\dots,n\bigr\},
\]
with
\[
  \epsilon > M_0\,h + \tilde R,
  \quad
  \tilde R
  = \Bigl(\tfrac{M\,h^m}{L\,(m+1)!} + \tfrac{\sqrt n\,R}{L\,h}\Bigr)
    \bigl(e^{Lkh}-1\bigr),
\]
where \(M=\sqrt{M_1^2+\cdots+M_n^2}\) and 
\[
  L = K_0 + K_1\,\tfrac h{2!} + \dots + K_{m-1}\,\tfrac{h^{m-1}}{(m-1)!}.
\]
If
\begin{itemize}
  \item \(f\) and all its partials up to order \(m+1\) are continuous on \(\overline{U_2}\);
  \item \(z_j\in U_1\) for \(j=0,1,\dots,k\);
  \item \(\lvert (F_m)_i(u)\rvert\le M_i\) for \(i=1,\dots,n\) and all \(u\in U_2\);
  \item \(\lvert Df(u)\rvert\le K_0\) and \(\lvert DF_i(u)\rvert\le K_i\) for \(i=1,\dots,m-1\) on \(U_2\);
  \item \(\lvert f_i(u)\rvert\le M_0\) for \(i=1,\dots,n\) on \(U_2\),
\end{itemize}
then the true solution \(Y(t)\) exists on \([0,hk]\), remains in \(U_2\), and satisfies
\[
  \lvert Y(jh) - z_j\rvert \;\le\; \tilde R,
  \quad
  j=1,2,\dots,k.
\]
\end{theorem}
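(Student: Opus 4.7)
The plan is to prove by induction on $j\in\{0,1,\ldots,k\}$ the joint statement that $Y$ exists on $[0,jh]$, stays in $U_2$, and satisfies $|Y(jh)-z_j|\le \tilde R_j$, where
\[
\tilde R_j \;:=\; \left(\frac{Mh^{m+1}}{(m+1)!}+\sqrt n\,R\right)\frac{(1+Lh)^j-1}{Lh}.
\]
Since $(1+Lh)^j\le e^{Ljh}\le e^{Lkh}$, one has $\tilde R_j\le \tilde R$ for all such $j$, which yields the theorem at $j=k$. The base case $j=0$ is immediate from $z_0=y_0=Y(0)$.

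For the inductive step, let $T(u):=u+f(u)h+F_1(u)\tfrac{h^2}{2!}+\cdots+F_{m-1}(u)\tfrac{h^m}{m!}$, so that $y_j=T(z_{j-1})$. Assuming the conclusion at step $j-1$, the error estimate follows from the triangle inequality
\[
|Y(jh)-z_j|\;\le\;|Y(jh)-T(Y((j-1)h))|+|T(Y((j-1)h))-T(z_{j-1})|+|y_j-z_j|.
\]
The first term is bounded by $\tfrac{Mh^{m+1}}{(m+1)!}$ via Taylor's theorem with Lagrange remainder, using that $Y^{(\ell+1)}(t)=F_\ell(Y(t))$ (writing $F_0:=f$) together with $|F_m|\le M$ on $U_2$. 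The second term is bounded by $(1+Lh)\,|Y((j-1)h)-z_{j-1}|$ via the mean value inequality, once one checks that $|DT(u)|\le 1+Lh$ on $U_2$ (a direct computation from the definition of $T$ and the bounds $|DF_\ell|\le K_\ell$) and uses the convexity of $U_2$. The third term is at most $\sqrt n\,R$, since floor-rounding each coordinate to an integer multiple of $R$ produces a per-coordinate error of at most $R$. Combining gives the linear recurrence $e_j\le (1+Lh)\,e_{j-1}+\tfrac{Mh^{m+1}}{(m+1)!}+\sqrt n\,R$ with $e_0=0$, whose closed form is exactly $\tilde R_j$.

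Before the error estimate can be applied, one must extend $Y$ from $[0,(j-1)h]$ to $[0,jh]$ and verify it lies in $U_2$. This is where the hypothesis $\epsilon>M_0h+\tilde R$ enters: using the inductive bound $|Y((j-1)h)-z_{j-1}|\le \tilde R$ together with $|f|\le M_0$ on $U_2$, a standard continuation argument shows that as long as $Y$ remains in $U_2$, each coordinate satisfies $|Y_i(t)-z_{j-1,i}|\le M_0\,(t-(j-1)h)+\tilde R<\epsilon$ on $[(j-1)h,jh]$, so the trajectory cannot reach $\partial U_2$ and the maximal solution extends past $jh$. The main obstacle I expect is exactly this joint bookkeeping: the strict inequality $\epsilon>M_0h+\tilde R$ must be used simultaneously to guarantee that (i) the true trajectory stays in $U_2$ so that Taylor's remainder bound applies, (ii) the segment from $z_{j-1}$ to $Y((j-1)h)$ lies in the convex set $U_2$ so that the Lipschitz bound on $T$ applies, and (iii) the induction closes with the uniform bound $\tilde R$ valid for every $j\le k$, rather than degrading stepwise.
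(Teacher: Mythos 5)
Your proposal is correct and follows essentially the same route as the paper's argument: a one-step Taylor (Lagrange remainder) bound of $\tfrac{Mh^{m+1}}{(m+1)!}$, a Lipschitz factor $1+Lh$ for the one-step map, a per-step rounding error $\sqrt{n}\,R$, the resulting discrete Gronwall recurrence summed to $\bigl(\tfrac{Mh^m}{L(m+1)!}+\tfrac{\sqrt n R}{Lh}\bigr)(e^{Lkh}-1)$, and the continuation argument from $\epsilon>M_0h+\tilde R$ keeping the true solution inside $U_2$. The bookkeeping order you flag (contain first, then estimate) is exactly how the paper handles it, so no gap remains.
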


Let us continue with a series of Lemmas and definitions necessary to check the conditions of Theorem \ref{RTM}.

\begin{definition}\label{def1} Let us define the sets 

\[
  U_1 = \bigl\{u\in\R^3:b_i\le u_i\le c_i,\;i=1,2,3\bigr\},
  \quad
  U_2 = \bigl\{u\in\R^3:b_i-\epsilon< u_i< c_i+\epsilon,\;i=1,2,3\bigr\},
\]

where 

$$(b_1,c_1,b_2,c_2,b_3,c_3)=\left(\frac{1321}{1000},\frac{1571}{1000},\frac{261}{500},\frac{393}{500},\frac{157}{100},\frac{1571}{500}\right)$$

and  $\epsilon=\frac{1}{10^3}$. Let us also define the functions

 $$g_1=\cos \left(u_3\right),g_2=\sin \left(u_3\right),g_3=\csc \left(u_1\right),g_4=\cot \left(u_1\right),g_5=\cot \left(2 u_2\right),g_6=\csc ^2\left(2 u_2\right)$$
  and the vector field 
 
 $$f=\left(f_1,\, f_2,\, f_3\right)=\left(g_1,\, g_2 g_3,\, -3 g_2 g_4+2 g_1 g_3 g_5-3\right)$$ 
  
\end{definition}

\begin{remark}
If we identify the function $r$ with the variable $u_1$, the function $\theta$ with the variable $u_2$ and the function $\alpha$ with the variable $u_3$, that is, if we identify $(r,\theta,\alpha)^T$ with $Y=(u_1,u_2,u_3)^T$, then the differential equation in Theorem \ref{mainthm} can be written as 

$$\dot{Y}=f(Y)$$
\end{remark}

%%%
%%%
\begin{lemma} \label{lemma1} If $g_i$, $i=1,\dots, 6$ and $U_2$ are defined as in Definition \ref{def1}, then

% somewhere before your big display:
\renewcommand{\arraystretch}{1.3}  % 1 is default; 1.3 gives ~30% more line spacing

\[
  \begin{array}{r@{\;}c@{\;}l@{\;}c@{\;}l@{\;}c@{\;}l}
    d_1 &=& -1 
       &\le& g_1 
       &\le& \cos\bigl(\tfrac{1569}{1000}\bigr)=e_1, \\[0.75ex]
    d_2 &=& \sin\bigl(\tfrac{3143}{1000}\bigr) 
       &\le& g_2 
       &\le& 1=e_2,        \\[0.75ex]
    d_3 &=& 1 
       &\le& g_3 
       &\le& \csc\bigl(\tfrac{33}{25}\bigr)=e_3, \\[0.75ex]
    d_4 &=& \cot\bigl(\tfrac{393}{250}\bigr) 
       &\le& g_4 
       &\le& \cot\bigl(\tfrac{33}{25}\bigr)=e_4, \\[0.75ex]
    d_5 &=& \cot\bigl(\tfrac{787}{500}\bigr) 
       &\le& g_5 
       &\le& \cot\bigl(\tfrac{521}{500}\bigr)=e_5, \\[0.75ex]
    d_6 &=& 1 
       &\le& g_6 
       &\le& \csc^2\bigl(\tfrac{521}{500}\bigr)=e_6.
  \end{array}
\]

\end{lemma}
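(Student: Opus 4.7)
The plan is to reduce each of the six inequalities to a one-variable extremum problem, since every $g_i$ depends on just one of $u_1,u_2,u_3$ (with $g_5,g_6$ depending on $u_2$ only through $2u_2$). The relevant intervals are short, so standard monotonicity of $\cos$, $\sin$, $\cot$, $\csc$ pins down the extremes once one knows which critical points of $\sin/\cos$ (namely $\pi/2$ and $\pi$) fall inside each interval.

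First I would unfold $U_2$ into the explicit intervals
\[
u_1 \in \left(\tfrac{33}{25},\tfrac{393}{250}\right),\qquad
2u_2 \in \left(\tfrac{521}{500},\tfrac{787}{500}\right),\qquad
u_3 \in \left(\tfrac{1569}{1000},\tfrac{3143}{1000}\right).
\]
Using the elementary rational bounds $1.5707 < \pi/2 < 1.5708$ and $3.14159 < \pi < 3.14160$, I would verify that $\pi/2$ lies in each of these three intervals and that $\pi$ lies in the third. Then, for each $g_i$, I would invoke the appropriate monotonicity of the underlying trig function. Whenever the interval contains a critical point of the base trig function, the extreme value there is automatically attained; this accounts for $d_1=-1$, $e_2=1$, $d_3=1$, and $d_6=1$. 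The remaining extrema (together with both extremes of $g_4$ and $g_5$, whose base function $\cot$ is monotonic on the relevant interval, contained in $(0,\pi)$) are taken at the endpoint visibly farther from the nearest critical point of $\sin$, which produces the $d_i$ and $e_i$ as stated.

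The only mildly delicate step is identifying at which endpoint the non-critical extreme of $g_1,g_2,g_3,g_6$ is attained. For $g_1$ and $g_2$ the two endpoint values have opposite signs (since $\cos(1569/1000)>0>\cos(3143/1000)$ and $\sin(1569/1000)>0>\sin(3143/1000)$), so the comparison is immediate. For $g_3$ and $g_6$, it reduces to checking that $1320/1000$ is farther from $\pi/2$ than $1572/1000$ is, and likewise that $1042/1000$ is farther from $\pi/2$ than $1574/1000$ is; both are clear from the two-decimal rational bound on $\pi$. I do not anticipate any real obstacle: each step is a monotonicity argument combined with a crude rational estimate of $\pi$, and the main work is careful bookkeeping across the six cases.
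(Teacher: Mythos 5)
Your proposal is correct and follows essentially the same route as the paper, which simply notes that each $g_i$ depends on a single variable ranging over the explicit interval $\bigl(b_i-\epsilon,\,c_i+\epsilon\bigr)$ and leaves the monotonicity/critical-point bookkeeping implicit. Your write-up just fills in those one-variable details (locating $\pi/2$ and $\pi$ in the intervals and comparing endpoint values), which is exactly the intended argument.
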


\begin{proof} The proof is straighforward because all the function $g_i$ given by expression  in only one variable. Notice that

$$\left(b_1-\epsilon,c_1+\epsilon,b_2-\epsilon,c_2+\epsilon,b_3-\epsilon,c_3+\epsilon\right)=\left(\frac{33}{25},\frac{393}{250},\frac{521}{1000},\frac{787}{1000},\frac{1569}{1000},\frac{3143}{1000}\right).$$

\end{proof}

\begin{lemma}\label{lemma2}
For any $u\in U_2$, the functions $f_1,f_2$ and $f_3$ satisfies that 

$$|f_1|\le 1, \quad |f_2|<1.033,\, |f_3|<4.98$$
\end{lemma}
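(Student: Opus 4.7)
The plan is to derive each bound directly from Lemma \ref{lemma1} using the triangle inequality and the submultiplicativity of absolute value. Since $f_1=g_1=\cos u_3$, the bound $|f_1|\le 1$ is immediate from the definition of cosine (Lemma \ref{lemma1} is not even needed here).

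For $|f_2|=|g_2 g_3|$, I would first observe that the $u_3$–interval $(b_3-\epsilon,c_3+\epsilon)=(1569/1000,3143/1000)$ contains $\pi/2$, so $|g_2|=|\sin u_3|\le 1$. The $u_1$-interval $(33/25,393/250)$ is contained in $(0,\pi)$, so $\sin u_1>0$ and $g_3>0$; by Lemma \ref{lemma1} it is bounded above by $e_3=\csc(33/25)$. Hence $|f_2|\le e_3$, and it remains to check numerically that $\csc(33/25)<1.033$.

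For $|f_3|$, applying the triangle inequality gives
\[
|f_3|\;\le\;3|g_2||g_4|\;+\;2|g_1||g_3||g_5|\;+\;3.
\]
Here the delicate point is that $g_4=\cot u_1$ and $g_5=\cot(2u_2)$ change sign across their respective intervals (since both $u_1$ and $2u_2$ cross $\pi/2$), so Lemma \ref{lemma1} gives intervals $[d_i,e_i]$ straddling $0$. I would therefore bound $|g_4|\le\max\{|d_4|,|e_4|\}$ and $|g_5|\le\max\{|d_5|,|e_5|\}$, and verify that in each case the positive endpoint dominates (namely $|e_4|=\cot(33/25)$ and $|e_5|=\cot(521/500)$ are larger in absolute value than $|d_4|=|\cot(393/250)|$ and $|d_5|=|\cot(787/500)|$, because $\cot$ is decreasing and each interval extends farther below $\pi/2$ than above). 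Combined with $|g_1|,|g_2|\le 1$ and $g_3\le\csc(33/25)$, this gives
\[
|f_3|\;\le\;3\cot(33/25)\;+\;2\,\csc(33/25)\cot(521/500)\;+\;3,
\]
and the conclusion follows by a numerical check that this quantity is less than $4.98$.

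The main obstacle is not conceptual but bookkeeping: correctly identifying which endpoint of each cotangent interval dominates in absolute value, and then performing the rational/numeric verification of the inequalities $\csc(33/25)<1.033$ and $3\cot(33/25)+2\csc(33/25)\cot(521/500)+3<4.98$ with enough precision that the strict inequalities are rigorously justified. Since all bounds are explicit trigonometric evaluations at fixed rationals, this is routine interval arithmetic and presents no real difficulty.
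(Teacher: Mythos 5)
Your proposal is correct and follows essentially the same route as the paper: both derive the bounds by interval estimates from Lemma \ref{lemma1} together with the triangle inequality and a final numerical check, and both arrive at essentially the same number (about $4.975<4.98$) for $f_3$. The only cosmetic difference is that the paper keeps signed interval bounds for the grouped terms $2g_1g_3g_5$ and $-3-3g_2g_4$ before applying the triangle inequality, whereas you bound absolute values term by term ($3\cot(33/25)+2\csc(33/25)\cot(521/500)+3$), which your correct endpoint analysis for the sign-changing cotangents makes equally valid.
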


\begin{proof}
The bound on $f_1$ follows directly from Lemma \ref{lemma1}. To prove the bound on $f_2=\sin(u_3)\csc(u_1)=g_2g_3$ we define the function $g_7=w_2w_3$ with $w_2\in [d_2,e_2]$ and  $w_3\in [d_3,e_3]$ a direct computation shows that in this domain we have that 

$$-0.002< \sin \left(\frac{3143}{1000}\right) \le g_7\le\csc \left(\frac{33}{25}\right),\csc \left(\frac{33}{25}\right)<1.033$$

In order to show the bound on $f_3$ we define the function $g_8=2w_1w_3w_5$ with $w_1\in [d_1,e_1]$, $w_3\in [d_3,e_3]$ and  $w_5\in [d_5,e_5]$ and the function 
$g_9=-3-3w_2w_4$ with $w_2\in [d_2,e_2]$ and  $w_4\in [d_4,e_4]$ a direct computation shows that 

\begin{eqnarray*}
-1.2064<-2 \cot \left(\frac{521}{500}\right) \csc \left(\frac{33}{25}\right)\le &g_8&\le\ -2 \cot \left(\frac{787}{500}\right) \csc \left(\frac{33}{25}\right)<0.0067\\
-3.7686<-3-3 \cot \left(\frac{33}{25}\right)\le &g_9&\le\ -3-3 \cot \left(\frac{393}{250}\right)<-2.9964
\end{eqnarray*}

using the triangular inequality it follows that $|f_3|<1.2064+3.7686<4.98$

\end{proof}

\begin{lemma}\label{lemma3} The Jacobian of the vector field $f$ is given by
$$Df=\left(
\begin{array}{ccc}
 0 & 0 & -g_2 \\
 -g_2 g_3 g_4 & 0 & g_1 g_3 \\
 g_3 \left(3 g_2 g_3-2 g_1 g_4 g_5\right) & -4 g_1 g_3 g_6 & -3 g_1 g_4-2 g_2 g_3 g_5 \\
\end{array}
\right)=\left\{\frac{\partial f_i}{\partial u_j}\right\}_{ij}$$

Moreover, if we define the matrix 

$$bDf=\left(
\begin{array}{ccc}
 0 & 0 & 1 \\
 0.265 & 0 & 1.033 \\
 3.506 & 5.539 & 1.21 \\
\end{array}
\right)=\left\{b_{ij}\right\}_{ij}$$

Then for any $u\in U_2$ and any $1\le i,j\le 3$ we have

$$\left|\frac{\partial f_i}{\partial u_j}(u)\right|\le b_{ij}$$

and $|Df(u)|<6.8246=K_0$.

\end{lemma}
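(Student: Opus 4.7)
The argument unfolds in three natural steps.

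First, I would derive the formula for $Df$ by direct chain-rule differentiation. From the definitions one reads off the elementary identities $\partial g_1/\partial u_3 = -g_2$, $\partial g_2/\partial u_3 = g_1$, $\partial g_3/\partial u_1 = -g_3 g_4$, $\partial g_4/\partial u_1 = -g_3^2$, and $\partial g_5/\partial u_2 = -2 g_6$. Substituting into $f_1 = g_1$, $f_2 = g_2 g_3$, and $f_3 = -3 g_2 g_4 + 2 g_1 g_3 g_5 - 3$ then reproduces the displayed matrix; for example
\[
\partial f_3/\partial u_1 = -3 g_2 (-g_3^2) + 2 g_1 g_5 (-g_3 g_4) = g_3 (3 g_2 g_3 - 2 g_1 g_4 g_5).
\]

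Second, for the entrywise bounds I would apply Lemma \ref{lemma1} together with interval arithmetic on products. The monomial entries are immediate from Lemma \ref{lemma1}: for example $|\partial f_2/\partial u_1| \le |g_2||g_3||g_4| \le \csc(33/25)\cot(33/25) < 0.265$, $|\partial f_2/\partial u_3| \le \csc(33/25) < 1.033$, and $|\partial f_3/\partial u_2| \le 4\csc(33/25)\csc^2(521/500) < 5.539$. The two compound entries $\partial f_3/\partial u_1 = g_3(3 g_2 g_3 - 2 g_1 g_4 g_5)$ and $\partial f_3/\partial u_3 = -3 g_1 g_4 - 2 g_2 g_3 g_5$ are each split into their two constituent monomial products, whose magnitudes are then bounded by the triangle inequality and Lemma \ref{lemma1}. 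For $\partial f_3/\partial u_3$ a bare triangle inequality only gives $\approx 1.97$; to reach $1.21$ one must exploit the identity $g_1^2 + g_2^2 = 1$ together with the fact that on $U_2$ the variable $u_3$ is confined to a sub-interval where $\cos u_3 \le 0$ and $\sin u_3 \ge 0$ (up to negligible end slivers), so the two summands $-(3 g_4)\cos u_3$ and $-(2 g_3 g_5)\sin u_3$ carry opposite signs and the combined bound collapses to $\max(3|g_4|,\, 2|g_3||g_5|) \le 2\csc(33/25)\cot(521/500) < 1.21$.

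Third, to obtain $|Df(u)| < K_0$ I would pass to the Frobenius norm: for any matrix $M$ with $|m_{ij}| \le b_{ij}$ one has $|Mv|_2 \le \|M\|_F |v|_2 \le \sqrt{\sum b_{ij}^2}\,|v|_2$, so it is enough to verify
\[
\sqrt{1^2 + 0.265^2 + 1.033^2 + 3.506^2 + 5.539^2 + 1.21^2} < 6.8246 = K_0,
\]
which is a direct numerical calculation.

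The principal obstacle is the entry $\partial f_3/\partial u_3$: both the naive triangle inequality ($\approx 1.97$) and the Cauchy--Schwarz reduction to $\sqrt{9 g_4^2 + 4 g_3^2 g_5^2}$ ($\approx 1.40$) are too weak, so reaching $1.21$ genuinely requires the specific sign structure of $\cos u_3$ and $\sin u_3$ on $U_2$. All the other entry bounds and the final Frobenius calculation are routine interval-arithmetic estimates.
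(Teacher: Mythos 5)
Your proposal is correct and follows essentially the same route as the paper: entrywise interval bounds obtained from Lemma \ref{lemma1} (with the sign/interval structure of $-3g_1g_4$ and $-2g_2g_3g_5$ needed to reach the $1.21$ entry, exactly the $g_8,g_9$-style splitting used for Lemma \ref{lemma2}), followed by the Frobenius-norm estimate, whose value $\sqrt{\sum b_{ij}^2}\approx 6.8245$ is visibly where the paper's $K_0=6.8246$ comes from. The only caution is that several margins are razor-thin — for instance the ``end sliver'' correction $3\,|d_1 d_4|\approx 0.0036$ must be added to $2\csc(33/25)\cot(521/500)\approx 1.2063$, leaving $|\partial f_3/\partial u_3|\le 1.2100$ with a margin of only about $5\times 10^{-5}$ — so these corrections need to be carried explicitly rather than dismissed, but they do check out (and the identity $g_1^2+g_2^2=1$ is not actually needed).
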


\begin{proof}
The proof follows the same arguments as those in  the proof of Lemma \ref{lemma2}. 
\end{proof}

\begin{lemma}\label{lemma4} If we define $F_1=\left(F_{11},F_{12},F_{13} \right)=Df\cdot f$ then for any $u\in U_2$ we have that

$$F_{11}<M_1=4.98,\quad F_{12}<M_2=5.41,\quad F_{13}<M_3=15.26$$

We also have that for any $u\in U_2$ and $i=1,2,3$,

$$|f_i|<4.98$$
\end{lemma}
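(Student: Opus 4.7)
The strategy would mirror that of Lemmas~\ref{lemma2} and \ref{lemma3}: compute each component of $F_1 = Df\cdot f$ explicitly as a polynomial in the basic functions $g_1,\dots,g_6$, then bound the resulting sum monomial by monomial using the intervals $[d_j,e_j]$ from Lemma~\ref{lemma1}.

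First, I would carry out the matrix-vector product $Df\cdot f$ row by row. Since the only nonzero entry in the top row of $Df$ is $-g_2$, one finds immediately
\[
F_{11} = -g_2 f_3 = 3 g_2^2 g_4 - 2 g_1 g_2 g_3 g_5 + 3 g_2.
\]
Analogous expansions give $F_{12} = -4 g_1 g_2 g_3 g_4 + 2 g_1^2 g_3^2 g_5 - 3 g_1 g_3$ and, after expanding the third row of $Df$ against $f$ and collecting, an expression for $F_{13}$ with roughly eight monomials in $g_1,\dots,g_6$ (including mixed terms of degrees four and five, such as $9 g_1 g_2 g_4^2$, $-8 g_1^2 g_3 g_4 g_5$, and $-4 g_1 g_2 g_3^2 g_5^2$).

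Second, for each monomial I would replace every $g_i$ by $\max(|d_i|,|e_i|)$, recording from Lemma~\ref{lemma1} that $|g_1|,|g_2|\le 1$, $|g_3|\le \csc(33/25)$, and so on, and then apply the triangle inequality. Done in exact rational arithmetic, this should produce
\[
|F_{11}| < 4.98,\quad |F_{12}| < 5.41,\quad |F_{13}| < 15.26,
\]
with a small amount of slack. The supplementary claim $|f_i|<4.98$ for $i=1,2,3$ is immediate from Lemma~\ref{lemma2}, since $|f_1|\le 1$, $|f_2|<1.033$, and $|f_3|<4.98$ are all at most $4.98$.

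The main obstacle I anticipate is $F_{13}$. It contains by far the most terms, and several are degree-five monomials whose factors $g_2$, $g_4$, $g_5$ correspond to intervals that pass through or very close to zero, so a crude absolute-value bound risks overshooting the target constant $15.26$. As in Lemmas~\ref{lemma2} and \ref{lemma3}, the cleanest remedy is to introduce auxiliary interval variables $w_i\in[d_i,e_i]$, evaluate each monomial by locating its extrema on the product of intervals (a matter of sign analysis in each factor), and then sum the resulting enclosures to obtain the stated inequalities.
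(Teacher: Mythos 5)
Your proposal is correct, but it takes a genuinely different (and more laborious) route than the paper. The paper does not re-expand $F_1$ in terms of $g_1,\dots,g_6$ at all: it simply combines the entrywise bounds $b_{ij}$ on $|\partial f_i/\partial u_j|$ already established in Lemma~\ref{lemma3} with the bounds $|f_1|\le 1$, $|f_2|<1.033$, $|f_3|<4.98$ from Lemma~\ref{lemma2}, so that by the triangle inequality $|F_{1i}|\le\sum_j b_{ij}\,|f_j|$, which is exactly the displayed matrix--vector product $bDf\cdot(1,\,1.033,\,4.98)^T=(4.98,\,5.40934,\,15.253587)^T$, each entry lying below the corresponding $M_i$; the claim $|f_i|<4.98$ is likewise read off from Lemma~\ref{lemma2}. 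Your expansions of $F_{11}$, $F_{12}$, $F_{13}$ are correct, and your instinct about $F_{13}$ is well founded: the crude bound obtained by replacing each $g_i$ with $\max(|d_i|,|e_i|)$ in every monomial comes out near $19$, which overshoots $15.26$, so your fallback of sign-aware interval enclosures per monomial is in fact necessary for that component (and it does succeed, yielding an enclosure roughly contained in $[-7.4,\,11.8]$, i.e., a sharper bound than the paper's). In short, your method buys potentially tighter constants at the cost of an eight-to-nine-term expansion and a careful sign analysis, while the paper's argument buys brevity by reusing Lemmas~\ref{lemma2} and~\ref{lemma3} and a single matrix multiplication; both establish the stated inequalities.
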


\begin{proof}
From Lemma \ref{lemma1} we have that $|f_1|\le 1, \quad |f_2|<1.033,\, |f_3|<4.98$. The proof of this Lemma uses basic arguments combining Lemma \ref{lemma2}, the triangular inequality and the fact that

$$\left(
\begin{array}{ccc}
 0 & 0 & 1 \\
 0.265 & 0 & 1.033 \\
 3.506 & 5.539 & 1.21 \\
\end{array}
\right)\begin{pmatrix}1\\1.033\\4.98\end{pmatrix}=\begin{pmatrix}4.98\\5.40934\\15.253587\end{pmatrix}
$$
\end{proof}
%%%%%
%%%%
%%%%%
\section{Proof of the main theorem}

In this section we prove Theorem \ref{mainthm}. We will be applying the RTM with \(R=10^{-10}\) (rounding to ten decimals) and, our values for $h$ would be either

\[
  h = 
  \begin{cases}
    \dfrac{0.3966}{25000}, & \text{when estimating at }t=0.3966,\\[6pt]
    \dfrac{0.3991}{25000}, & \text{when estimating at }t=0.3991.
  \end{cases}
\]
The value for $\epsilon$, $M_1,M_2,M_3,K_0$ and the sets $U_1$ and $U_2$ that we will be using for the RTM (Theorem \ref{RTM}) are taken from Definition \ref{def1} and Lemmas \ref{lemma3} and \ref{lemma4}. Since all entries of the sequence \(\{z_i\}\) are monotone, it is not hard to show \(z_j\in U_1\).  For the reader’s convenience we restate the theorem.

\begin{theorem}\label{mainthm}

There exist real numbers $a_*\in\left(\frac{5204}{10000},\frac{5244}{10000}\right)$ and  
$t_*\in\left(\frac{3966}{10000},\frac{3991}{10000}\right)$ such that the solution 
$(r(t),\theta(t),\alpha(t))$ of the differential  equation

\[
\left\{
\begin{aligned}
\dot r     &= \cos\alpha,\\
\dot \theta &= \frac{\sin\alpha}{\sin r},\\
\dot \alpha &= \frac{2\cot(2\theta)}{\sin r}\,\cos\alpha
               - 3\cot r\,\sin\alpha -3\,,
\end{aligned}
\right.
\]

that satisfies $r(0)=\frac{\pi}{2}$, $\theta(0)=a_*$ and $\alpha(0)=\pi$ is defined for all $t\in[0,\frac{3991}{10000}]$ and 

$$\alpha(t_*)=\frac{\pi}{2},\quad \theta(t_*)=\frac{\pi}{4}$$
\end{theorem}

\begin{proof}
Let us consider the functions $\alpha$ and $\theta$ as functions in two variables. More precisely we define $\alpha(a,t)$ and $\theta(a,t)$ as the solution of the differential equation that satisfies the initial condition $r(0)=\frac{\pi}{2}$, $\alpha(0)=\pi$ and $\theta(0)=a$. The idea of the proof is to show that 

$$\alpha(a,0.3966)>\frac{\pi}{2} \quad\hbox{and}\quad \alpha(a,0.3991)<\frac{\pi}{2}\quad\hbox{for all $a\in[0.5204,0.5244]$}$$
and
$$\theta(0.5204,t)<\frac{\pi}{4} \quad\hbox{and}\quad \theta(0.5244,t)>\frac{\pi}{4}\quad\hbox{for all $t\in[0.3966,0.3991]$}.$$

The theorem will follow from the Poincaré–Miranda theorem.  To show the inequalities for \(\theta(a,t)\) is relatively easy because we have explicit bounds on $\frac{\partial \theta}{\partial t}=\theta'(t)$. To show the inequality for the function $\alpha(a,t)$ is a little more challenging because the control on the change of $\alpha$ with respect to the variable $a$ comes from Theorem \ref{thm:stability}. Since the bound given by the Theorem is exponential we need to estimate the function on several values of $a$, to be precise we will be considering 

$$a_i=0.5204+j\, \frac{0.004}{15}\quad\hbox{with }\quad j=0,\dots, 15$$

A direct computation gives us that if we do the RTM to  estimate  $(r(a_j,0.3966),\theta(a_j,0.3966),\alpha(a_j,0.3966))$ using $h=\frac{0.3966}{25000}$ and  $(r(a_j,0.3991),\theta(a_j,0.3991),\alpha(a_j,0.3991))$ using $h=\frac{0.3991}{25000}$ we obtain the values

\[
% locally bump \arraystretch
{\renewcommand{\arraystretch}{\mymatrixstretch}%
\left(
\begin{array}{ccc}
\tilde{r}(a_0,0.3966) & \tilde{\theta}(a_0,0.3966)& \tilde{\alpha}(a_0,0.3966) \\
\tilde{r}(a_1,0.3966) & \tilde{\theta}(a_1,0.3966)& \tilde{\alpha}(a_1,0.3966)  \\
\tilde{r}(a_2,0.3966) & \tilde{\theta}(a_2,0.3966)& \tilde{\alpha}(a_2,0.3966) \\
\tilde{r}(a_3,0.3966) & \tilde{\theta}(a_3,0.3966)& \tilde{\alpha}(a_3,0.3966) \\
\tilde{r}(a_4,0.3966) & \tilde{\theta}(a_4,0.3966)& \tilde{\alpha}(a_4,0.3966) \\
\tilde{r}(a_5,0.3966) & \tilde{\theta}(a_5,0.3966)& \tilde{\alpha}(a_5,0.3966)  \\
\tilde{r}(a_6,0.3966) & \tilde{\theta}(a_6,0.3966)& \tilde{\alpha}(a_6,0.3966) \\
\tilde{r}(a_7,0.3966) & \tilde{\theta}(a_7,0.3966)& \tilde{\alpha}(a_7,0.3966) \\
\tilde{r}(a_8,0.3966) & \tilde{\theta}(a_8,0.3966)& \tilde{\alpha}(a_8,0.3966) \\
\tilde{r}(a_9,0.3966) & \tilde{\theta}(a_9,0.3966)& \tilde{\alpha}(a_9,0.3966) \\
\tilde{r}(a_{10},0.3966) & \tilde{\theta}(a_{10},0.3966)& \tilde{\alpha}(a_{10},0.3966) \\
\tilde{r}(a_{11},0.3966) & \tilde{\theta}(a_{11},0.3966)& \tilde{\alpha}(a_{11},0.3966) \\
\tilde{r}(a_{12},0.3966) & \tilde{\theta}(a_{12},0.3966)& \tilde{\alpha}(a_{12},0.3966) \\
\tilde{r}(a_{13},0.3966) & \tilde{\theta}(a_{13},0.3966)& \tilde{\alpha}(a_{13},0.3966)  \\
\tilde{r}(a_{14},0.3966) & \tilde{\theta}(a_{14},0.3966)& \tilde{\alpha}(a_{14},0.3966) \\
\tilde{r}(a_{15},0.3966) & \tilde{\theta}(a_{15},0.3966)& \tilde{\alpha}(a_{15},0.3966)  \\
\end{array}
\right)
=\left(
\begin{array}{ccc}
 \frac{2644387103}{2000000000} & \frac{488979583}{625000000} & \frac{15734399013}{10000000000} \\
 \frac{1322132297}{1000000000} & \frac{7825948977}{10000000000} & \frac{15737174823}{10000000000} \\
 \frac{6610355281}{5000000000} & \frac{7828224597}{10000000000} & \frac{3934987681}{2500000000} \\
 \frac{3305024603}{2500000000} & \frac{7830500241}{10000000000} & \frac{15742726973}{10000000000} \\
 \frac{1652435797}{1250000000} & \frac{7832775777}{10000000000} & \frac{15745503261}{10000000000} \\
 \frac{13218874699}{10000000000} & \frac{489690711}{625000000} & \frac{7874139927}{5000000000} \\
 \frac{2643652613}{2000000000} & \frac{3918663461}{5000000000} & \frac{7875528331}{5000000000} \\
 \frac{13217651787}{10000000000} & \frac{1567920511}{2000000000} & \frac{1969229197}{1250000000} \\
 \frac{3304260153}{2500000000} & \frac{1960469517}{2500000000} & \frac{1575661087}{1000000000} \\
 \frac{13216429757}{10000000000} & \frac{7844153569}{10000000000} & \frac{630375529}{400000000} \\
 \frac{825988681}{625000000} & \frac{7846429189}{10000000000} & \frac{15762165867}{10000000000} \\
 \frac{412975267}{312500000} & \frac{7848704599}{10000000000} & \frac{1576494373}{1000000000} \\
 \frac{13214598143}{10000000000} & \frac{1570196029}{2000000000} & \frac{7883860961}{5000000000} \\
 \frac{6606994017}{5000000000} & \frac{3926627829}{5000000000} & \frac{15770500083}{10000000000} \\
 \frac{2642675617}{2000000000} & \frac{7855531043}{10000000000} & \frac{15773278497}{10000000000} \\
 \frac{13212768441}{10000000000} & \frac{3928903267}{5000000000} & \frac{7888028663}{5000000000} \\
\end{array}
\right)
}
\]

\[
% locally bump \arraystretch
{\renewcommand{\arraystretch}{\mymatrixstretch}%
\left(
\begin{array}{ccc}
\tilde{r}(a_0,0.3991) & \tilde{\theta}(a_0,0.3991)& \tilde{\alpha}(a_0,0.3991) \\
\tilde{r}(a_1,0.3991) & \tilde{\theta}(a_1,0.3991)& \tilde{\alpha}(a_1,0.3991)  \\
\tilde{r}(a_2,0.3991) & \tilde{\theta}(a_2,0.3991)& \tilde{\alpha}(a_2,0.3991) \\
\tilde{r}(a_3,0.3991) & \tilde{\theta}(a_3,0.3991)& \tilde{\alpha}(a_3,0.3991) \\
\tilde{r}(a_4,0.3991) & \tilde{\theta}(a_4,0.3991)& \tilde{\alpha}(a_4,0.3991) \\
\tilde{r}(a_5,0.3991) & \tilde{\theta}(a_5,0.3991)& \tilde{\alpha}(a_5,0.3991)  \\
\tilde{r}(a_6,0.3991) & \tilde{\theta}(a_6,0.3991)& \tilde{\alpha}(a_6,0.3991) \\
\tilde{r}(a_7,0.3991) & \tilde{\theta}(a_7,0.3991)& \tilde{\alpha}(a_7,0.3991) \\
\tilde{r}(a_8,0.3991) & \tilde{\theta}(a_8,0.3991)& \tilde{\alpha}(a_8,0.3991) \\
\tilde{r}(a_9,0.3991) & \tilde{\theta}(a_9,0.3991)& \tilde{\alpha}(a_9,0.3991) \\
\tilde{r}(a_{10},0.3991) & \tilde{\theta}(a_{10},0.3991)& \tilde{\alpha}(a_{10},0.3991) \\
\tilde{r}(a_{11},0.3991) & \tilde{\theta}(a_{11},0.3991)& \tilde{\alpha}(a_{11},0.3991) \\
\tilde{r}(a_{12},0.3991) & \tilde{\theta}(a_{12},0.3991)& \tilde{\alpha}(a_{12},0.3991) \\
\tilde{r}(a_{13},0.3991) & \tilde{\theta}(a_{13},0.3991)& \tilde{\alpha}(a_{13},0.3991)  \\
\tilde{r}(a_{14},0.3991) & \tilde{\theta}(a_{14},0.3991)& \tilde{\alpha}(a_{14},0.3991) \\
\tilde{r}(a_{15},0.3991) & \tilde{\theta}(a_{15},0.3991)& \tilde{\alpha}(a_{15},0.3991)  \\
\end{array}
\right)
=\left(
\begin{array}{ccc}
 \frac{13221985897}{10000000000} & \frac{7849465573}{10000000000} & \frac{625614391}{400000000} \\
 \frac{3305341587}{2500000000} & \frac{7851741557}{10000000000} & \frac{1564313037}{1000000000} \\
 \frac{13220747031}{10000000000} & \frac{7854017679}{10000000000} & \frac{7822950733}{5000000000} \\
 \frac{3305031973}{2500000000} & \frac{7856293797}{10000000000} & \frac{15648672607}{10000000000} \\
 \frac{13219508981}{10000000000} & \frac{3929284889}{5000000000} & \frac{15651443943}{10000000000} \\
 \frac{413090323}{312500000} & \frac{1965211471}{2500000000} & \frac{7827107803}{5000000000} \\
 \frac{6609135893}{5000000000} & \frac{3931560893}{5000000000} & \frac{978561719}{625000000} \\
 \frac{13217653441}{10000000000} & \frac{491587361}{625000000} & \frac{7829879799}{5000000000} \\
 \frac{1321703543}{1000000000} & \frac{1966918439}{2500000000} & \frac{15662531883}{10000000000} \\
 \frac{13216417529}{10000000000} & \frac{9837437}{12500000} & \frac{15665304353}{10000000000} \\
 \frac{660789997}{500000000} & \frac{196805639}{250000000} & \frac{15668077229}{10000000000} \\
 \frac{13215182437}{10000000000} & \frac{1574900287}{2000000000} & \frac{3134170029}{2000000000} \\
 \frac{6607282587}{5000000000} & \frac{7876777299}{10000000000} & \frac{15673623461}{10000000000} \\
 \frac{13213948101}{10000000000} & \frac{3939526607}{5000000000} & \frac{7838198359}{5000000000} \\
 \frac{3303332839}{2500000000} & \frac{1576265811}{2000000000} & \frac{7839585261}{5000000000} \\
 \frac{13212714809}{10000000000} & \frac{3941802431}{5000000000} & \frac{15681944331}{10000000000} \\
\end{array}
\right)}
\]

For the estimates above the error provided by the RTM is smaller that

$$\Bigl(\tfrac{M\,h^m}{L\,(m+1)!} + \tfrac{\sqrt n\,R}{L\,h}\Bigr)
    \bigl(e^{Lkh}-1\bigr)<0.0003048$$

Recall that $m=1$, $R=10^{-10}$, $n=3$, $L=K_0=6.8246$ and $M=\sqrt{M_1^2+M_2^2+M_3^2}\approx 16.9424$ and $h$ is either $\frac{0.3991}{25000}$ or $\frac{0.3966}{25000}$. A direct computations shows that all the 16 values $\tilde{\alpha}(a_j,0.3966)$ satisfy the following inequalities

$$\frac{\pi}{2}<\tilde{\alpha}(a_0,0.3966)<\tilde{\alpha}(a_1,0.3966)<\dots<\tilde{\alpha}(a_{15},0.3966)$$

For any $a$ between $a_0=0.5204$ and $a_{15}=0.5244$, there exists a $j$ such that $|a-a_j|\le \frac{0.002}{15}$, therefore, using Theorem \ref{thm:stability}, we obtain that 

$$|\alpha(a,0.3966)-\alpha(a_j,0.3966)|\le \frac{0.002}{15} e^{ 0.3966K_0}<0.001998$$

Since the distance between $\alpha(a_j,0.3966)$ and $\tilde{\alpha}(a_j,0.3966)$ is smaller than $0.0003048$ and the distance between $\tilde{\alpha}(a_j,0.3966)$ and $\frac{\pi}{2}$ is greater than $0.00264$ then we conclude that:

$$ \hbox{for every $a\in [0.5204,0.5244]$, $\alpha(a,0.3966)\ge \frac{\pi}{2}$}$$

A similar argument shows that 

$$ \hbox{for every $a\in [0.5204,0.5244]$, $\alpha(a,0.3991)\le \frac{\pi}{2}$}$$

This time we have that 

$$\tilde{\alpha}(a_0,0.3991)<\tilde{\alpha}(a_1,0.3991)<\dots<\tilde{\alpha}(a_{15},0.3991)<\frac{\pi}{2}\, ,$$

$$|\alpha(a,0.3991)-\alpha(a_j,0.3991)|\le \frac{0.002}{15} e^{ 0.3991K_0}<0.00204\, ,$$

 the distance between $\alpha(a_j,0.3991)$ and $\tilde{\alpha}(a_j,0.3991)$ is smaller than $0.0003048$ and the distance between $\tilde{\alpha}(a_j,0.3991)$ and $\frac{\pi}{2}$ is greater than $0.002601$. Let us move to study the function $\theta$. Let us denote by 
 
 $$\theta_i(0.5204) \quad and \quad\hbox{with $i\, =\, 1,\dots 25000$ }$$
 
 The approximations for $\theta(0.5204,i\frac{0.3991}{25000})$ given by the RTM using $h=\frac{0.3991}{25000}$. Recall that (see table above)
 
 $$\theta_{25000}(0.5204)=\tilde{\theta}(a_0,0.3991)=\frac{7849465573}{10000000000}<\frac{\pi}{4}-0.00045$$
 
 We can check that all the $\theta_i(0.5204)$ are smaller than $\theta_{25000}(0.5204)$ (actually the sequence $\{\theta_i(0.5204)\}_{i=1}^{25000}$ is increasing). 
 
 For any $t\in[0.3966,0.3991]$ there exists and $i$ such that 
 $|t-i\frac{0.3991}{25000}|\le \frac{0.3991}{50000}$. We have that
 
 $$|\theta(0.5204,t)-\theta(0.5204,i\frac{0.3966}{25000})|\le \theta'(0.5204,\bar{t}) \, |t-i\frac{0.3966}{25000}|<1.033*\frac{0.3966}{50000}<0.00001$$
 
 In the inequality above we are using the fact that $|f_3(u)|<1.033$ and that $(0.5244,\bar{t})\in U_2$ because $\bar{t}$ is between $t$ and $ i\frac{0.3966}{25000}$
 
 Using the error formula for the RTM, we get that $|\theta_i(0.5004)-\theta(0.5204,i\frac{0.3966}{25000})|<0.0003048$ and therefore, since the distance from  $\theta_i(0.5204)$ to 
 $\frac{\pi}{4}$ is greater than $0.00045$ we get that

 $$ \hbox{for every $t\in [0.3966,0.3944]$, $\theta(0.5204,t)\le \frac{\pi}{4}$}$$
 
 Let us denote by 
 
 $$\theta_i(0.5244) \quad and \quad\hbox{with $i,1,\dots 25000$ }$$
 
 The approximations for $\theta(0.5244,i\frac{0.3991}{25000})$ given by the RTM using $h=\frac{0.3991}{25000}$. Once again we can check that the sequence $\{\theta_i(0.5244)\}_{i=1}^{25000}$ is increasing. Since
 
 $$24843\frac{0.3991}{25000}<0.3966<24844\frac{0.3991}{25000}\, ,$$
 
 we have that for every  for any $t\in[0.3966,0.3991]$ there exists an integer  $i\in [24843,25000]$ such that 
 $|t-i\frac{0.3991}{25000}|\le \frac{0.3991}{50000}$. Same arguments as before show that 
 
 $$|\theta(0.5244,t)-\theta(0.5244,i\frac{0.3991}{25000})|<0.00001\, .$$ 
 
 Since 
 
 $$\frac{\pi}{4}<\theta_{24843}(0.5244)=\frac{7857740589}{10000000000}<\dots<\theta_{25000}(0.5244)=\frac{3941802431}{5000000000}$$
 
 and the distance between $\theta_{24843}(0.5244)$ and $\frac{\pi}{4}$ is bigger than $0.000375$, we conclude that
 
 $$ \hbox{for every $t\in [0.3966,0.3991]$, $\theta(0.5244,t)\ge \frac{\pi}{4}$}$$

The Poincare-Miranda theorem and the inequalities of the functions$\alpha$ and $\theta$ on the boundary of the rectangle $ [0.5204,0.5244]\times [0.3966,0.3991]$ gives the existence of an 
$(a_*,t_*)$ such that $\alpha(a_*,t_*)=\frac{\pi}{2}$ and $\theta(a_*,t_*)=\frac{\pi}{4}$.

\end{proof}

\end{document}